\documentclass[12pt, a4paper]{article}
\usepackage[T1]{fontenc}
\usepackage[utf8]{inputenc}
\usepackage[english]{babel}
\usepackage{amsmath, amssymb, amsthm}
\usepackage{hyperref}

\newcommand\blfootnote[1]{%
  \begingroup
  \renewcommand\thefootnote{}\footnote{#1}%
  \addtocounter{footnote}{-1}%
  \endgroup
}

\newtheorem{theorem}{Theorem}[section]
\newtheorem{lemma}[theorem]{Lemma}
\newtheorem{definition}[theorem]{Definition}
\newtheorem{corollary}[theorem]{Corollary}

\begin{document}

\title{Algebraic Proof of Completeness of a Positional Numeral System in the Normalized Mersenne Basis}
\author{E.Dyachenko\thanks{\texttt{dyachenko.eduard@gmail.com}}}
\date{August 2026}
\maketitle

\blfootnote{\emph{Mathematics Subject Classification 2020}: Primary 11A63, 11B83; Secondary 11D61.}
\blfootnote{\emph{Key words}: Mersenne numbers, non-classical numeral systems, radix representation, greedy algorithm, canonical form.}
\blfootnote{\copyright\ 2026 E. Dyachenko. This work is licensed under a \href{http://creativecommons.org/licenses/by/4.0/}{CC BY 4.0 International License}.}

\begin{abstract}
This paper introduces and investigates a non-classical positional numeral system constructed on the normalized Mersenne basis $M_k = (2^k-1)/3$, structurally linking the classical Mersenne sequence (OEIS \textbf{A000225}) with the integer sequence of alternating binary bitmasks (OEIS \textbf{A002450}). The main objective of this study is to provide a rigorous algebraic proof of the system's completeness---specifically, theorems on the existence and absolute uniqueness of a canonical representation for any natural number. We show that the internal dynamics of the basis are deterministically governed by the splitting identity $3M_k = M_{k+1} + 2M_{k-1}$, which strictly restricts the valid alphabet of coefficients to the set $\{0, 1, 2\}$. Based on the lemma concerning the supremum of lower-order digits and a constructive greedy algorithm, it is proven that the proposed basis forms a closed and unambiguous algebraic space, entirely precluding the overlap of positional masses.
\end{abstract}

\section{Introduction}
\label{sec:intro}

In classical number theory, canonical Mersenne numbers are defined by the sequence $M^*_k = 2^k - 1$, where $k \in \mathbb{N}$ (sequence \textbf{A000225} in the On-Line Encyclopedia of Integer Sequences \cite{OEIS}). Within the framework of the binary numeral system, these numbers possess a algebraic structure: their bitwise representation is identical to the sum of the first $k$ powers of two, forming a sequence of $k$ ones ($M^*_k = \sum_{i=0}^{k-1} 2^i = 11\dots1_2$). 

However, the bitmask $M^*_k$ is structurally homogeneous. This structural homogeneity makes its direct use as a positional numeral system basis---where strict digit isolation is required---highly problematic. To form an ordered coordinate grid, this paper introduces the algebraic operation of \textit{normalization}, which consists of dividing a canonical Mersenne number by the basic bit pattern $11_2$ (the number 3 in decimal notation).

We define the elements of the normalized Mersenne basis as the sequence:
\begin{equation}
\label{eq:mersenne_norm}
M_k = \frac{2^k - 1}{3}, \quad k \in \mathbb{N}, \quad k \ge 1.
\end{equation}

For even values of $k$ ($k=2m$), the basis elements form strictly the integer sequence $1, 5, 21, 85, \dots$, which corresponds exactly to OEIS \textbf{A002450} (numbers with alternating binary representations). For odd values of $k$, the elements belong to the rational domain, forming a mixed integer-fractional spectrum. The recursive construction of this basis is given by the linear relation $M_{k+1} = 2M_k + M_1$, where the initial element is $M_1 = 1/3$. 

The proposed algebraic structure conceptually expands the theory of positional systems. A shift in this area was initiated by A. Fraenkel and C. Frougny, who described greedy packing algorithms and alphabet properties for arbitrary integer monotonically increasing sequences \cite{Fraenkel1985, Frougny2002}. In parallel, the class of Rational Base Number Systems (RBNS) emerged, extensively studied by S. Akiyama and co-authors \cite{AkiyamaFrougny2008}. 

The normalized Mersenne basis establishes a unique algebraic bridge between these two domains. It relies on the linear recurrent topology of integer sequences (like \textbf{A002450}) while processing rational elements to maintain the absolute determinism of the splitting identity: $3M_k = M_{k+1} + 2M_{k-1}$. The appearance of the coefficient 3 initiates an algebraic cascade, transferring mass to adjacent digits and strictly limiting the alphabet to $a_k \in \{0, 1, 2\}$.

The objective of this paper is to provide an algebraic proof of the theorem on the completeness and uniqueness of the representation of natural numbers in this basis. It is necessary to prove that any $x \in \mathbb{N}$ admits a strictly unique decomposition under the condition of the canonicity criterion (greedy packing).

\section{Algebraic Properties of the Normalized Basis}
\label{sec:algebra}

The normalized Mersenne basis possesses an internal algebraic structure that determines the rules for carrying and redistributing values between positional digits. These rules are expressed through strict linear identities.

\subsection{The Splitting Identity}

The base operator determining the interaction of adjacent digits is multiplication by 3. Let us consider the algebraic transformation of the element $M_k$.

By definition, $M_k = \frac{2^k - 1}{3}$. Multiplying this element by 3 yields the exact value:
\begin{equation}
3M_k = 2^k - 1.
\end{equation}

Let us express this value as a linear combination of adjacent basis elements $M_{k+1}$ and $M_{k-1}$:
\begin{equation}
M_{k+1} + 2M_{k-1} = \frac{2^{k+1} - 1}{3} + 2 \left( \frac{2^{k-1} - 1}{3} \right).
\end{equation}

Bringing this to a common denominator and expanding the parentheses, we obtain:
\begin{equation}
\frac{2 \cdot 2^k - 1 + 2^k - 2}{3} = \frac{3 \cdot 2^k - 3}{3} = 2^k - 1.
\end{equation}

Equating the results of the transformations yields the  splitting identity:
\begin{equation}
\label{eq:splitting}
3M_k = M_{k+1} + 2M_{k-1}.
\end{equation}

Equation \eqref{eq:splitting} represents a linear homogeneous recurrence relation of the second order ($M_{k+1} - 3M_k + 2M_{k-1} = 0$). The roots of the characteristic polynomial of this equation ($\lambda_1 = 2$, $\lambda_2 = 1$) completely determine the exponential dynamics of the basis and algebraically link the operations of multiplication by 3 and division by 2 in this space.

\subsection{The Valid Alphabet of Coefficients}
\label{subsec:alphabet}

The splitting identity \eqref{eq:splitting} acts as a strict limiter for the maximum allowable coefficient for a single basis element in the canonical form of number representation.

If, as a result of algebraic operations, the coefficient of the basis element $M_k$ reaches the value 3, the representation of the number transitions into a non-canonical state. The component $3M_k$ is deterministically split: one part is carried over to the higher digit ($M_{k+1}$), and two parts are transferred to the lower digit ($2M_{k-1}$).

Consequently, a coefficient equal to 3 or more mathematically cannot be present in the final canonical decomposition. The alphabet of coefficients is strictly limited to the set of non-negative integers less than three:
\begin{equation}
a_k \in \{0, 1, 2\}.
\end{equation}

Any local exceeding of this limit initiates a directed cascade of splits, redistributing the value across the coordinate grid until all polynomial coefficients return to the valid range $\{0, 1, 2\}$.

\section{Spaces of Number Representation}
\label{sec:spaces}

Relying on the introduced basis $M_k$, any natural number $x \in \mathbb{N}$ can be decomposed in two different, yet algebraically isomorphic, ways. These decompositions form two state spaces.

\subsection{Spectral Form}

The spectral form describes the bit topology of a number and is constructed through the analysis of differences in the binary representation of the value $3x$.

Let the value $3x$ have a finite binary decomposition $3x = \sum_{k=0}^{N-1} b_k 2^k$, where $b_k \in \{0, 1\}$. 
Let us define a difference operator that generates a spectral vector of coefficients:
\begin{equation}
\label{eq:spectral_diff}
c_k = b_{k-1} - b_k, \quad \text{where } b_{-1} = 0.
\end{equation}
Since the bits $b_k$ take values exclusively from the set $\{0, 1\}$, the coefficients $c_k$ are deterministically restricted to the alphabet $\{-1, 0, 1\}$.

\begin{definition}[Spectral Form]
The spectral form of a natural number $x$ is defined as a finite sum of the form:
\begin{equation}
x = \sum_{k=1}^N c_k M_k, \quad c_k \in \{-1, 0, 1\}.
\end{equation}
\end{definition}

The algebraic meaning of the spectrum lies in its direct correspondence to the bitmask. Positive coefficients ($c_k = 1$) mark the beginning of continuous blocks of ones in the binary representation of $3x$, while negative coefficients ($c_k = -1$) encode breaks, compensating for the excessive arithmetic value of the higher basis elements.

\begin{lemma}[Correctness of the Spectral Form]
For any $x \in \mathbb{N}$, if the value $3x = \sum_{k=0}^{N-1} b_k 2^k$, and the vector of coefficients is given by the difference operator $c_k = b_{k-1} - b_k$ (where $b_{-1} = b_N = 0$), then the following identity holds:
\begin{equation}
x = \sum_{k=1}^N c_k M_k.
\end{equation}
\end{lemma}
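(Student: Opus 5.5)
The plan is to clear denominators and reduce the claim to a telescoping computation on the binary digits of $3x$. Multiplying the desired identity by $3$ and using $3M_k = 2^k - 1$ from the definition \eqref{eq:mersenne_norm}, it suffices to show
\begin{equation*}
\sum_{k=1}^{N} (b_{k-1} - b_k)(2^k - 1) = 3x = \sum_{j=0}^{N-1} b_j 2^j .
\end{equation*}
I will split the left-hand side into two sums, $S_1 = \sum_{k=1}^N (b_{k-1}-b_k)2^k$ and $S_2 = \sum_{k=1}^N (b_{k-1}-b_k)$, and evaluate each separately.

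The sum $S_2$ telescopes directly: $S_2 = b_0 - b_N = b_0$, using the convention $b_N = 0$ from the statement. For $S_1$, I split it as $\sum_{k=1}^N b_{k-1}2^k - \sum_{k=1}^N b_k 2^k$. I then reindex the first part with $j = k-1$ to get $2\sum_{j=0}^{N-1} b_j 2^j = 2\cdot 3x$. The second part equals $\sum_{k=0}^{N} b_k 2^k - b_0 = 3x - b_0$, again because $b_N = 0$. Hence $S_1 = 6x - (3x - b_0) = 3x + b_0$, and therefore $S_1 - S_2 = 3x + b_0 - b_0 = 3x$, which is the required identity. Dividing by $3$ then gives $x = \sum_{k=1}^N c_k M_k$.

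The only delicate point is bookkeeping at the boundary indices. The coefficient $c_0 = b_{-1} - b_0 = -b_0$ produced by the difference operator \eqref{eq:spectral_diff} does not appear in the sum, which starts at $k=1$. I will remark that this omission is harmless, since $M_0 = (2^0-1)/3 = 0$, so the sum could equally be taken from $k=0$. Similarly, I need $b_N = 0$ so that the top index contributes correctly; this holds because $N$ is chosen so that $3x$ has at most $N$ binary digits.

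I expect no real obstacle beyond this index care. In particular, the $b_0$ terms from $S_1$ and $S_2$ must cancel exactly, and that cancellation is the step I will verify most carefully.
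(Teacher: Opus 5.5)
Your proof is correct. It is essentially the paper's summation-by-parts argument run in the opposite direction. The paper inverts the difference operator, writing $b_k=\sum_{i=k+1}^{N}c_i$ (using $b_N=0$), substitutes this into $3x=\sum_{k=0}^{N-1}b_k2^k$, and interchanges the order of summation so that the geometric sums $\sum_{k=0}^{i-1}2^k=2^i-1$ appear directly. You instead start from $\sum_{k=1}^{N}c_k(2^k-1)$ and split it into a shift-reindexed part and a telescoping part, whose $b_0$ terms cancel.
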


\begin{proof}
From the definition of the difference operator, it directly follows that any bit of the original number is expressed through a suffix sum of the spectral coefficients: $b_k = \sum_{i=k+1}^N c_i$. 
Substitute this expression into the binary decomposition of $3x$:
\begin{equation}
3x = \sum_{k=0}^{N-1} \left( \sum_{i=k+1}^N c_i \right) 2^k.
\end{equation}
We change the order of summation by grouping the terms for each coefficient $c_i$:
\begin{equation}
3x = \sum_{i=1}^N c_i \left( \sum_{k=0}^{i-1} 2^k \right).
\end{equation}
The sum of the geometric progression in the parentheses is equal to $2^i - 1$. Thus, we obtain:
\begin{equation}
3x = \sum_{i=1}^N c_i (2^i - 1).
\end{equation}
Dividing both sides of the equation by 3 and recalling the definition of the basis $M_i = \frac{2^i - 1}{3}$, we obtain the required identity:
\begin{equation}
x = \sum_{i=1}^N c_i M_i.
\end{equation}
The lemma is proved.
\end{proof}

\subsection{Canonical Form}

Unlike the spectral form, which allows negative coefficients, the canonical form describes the true arithmetic value of a number through a strict greedy packing algorithm.

\begin{definition}[Canonical Form]
The canonical form of a natural number $x$ is its representation as:
\begin{equation}
x = \sum_{k=1}^K a_k M_k, \quad a_k \in \{0, 1, 2\},
\end{equation}
satisfying the strict condition of canonicity: for any index $k \le K$, the arithmetic sum of all lower-order digits is strictly less than the value of the next basis element:
\begin{equation}
\label{eq:canonicity}
\sum_{j=1}^{k-1} a_j M_j < M_k.
\end{equation}
\end{definition}

Condition \eqref{eq:canonicity} is the constraint of the system. It guarantees the absence of algebraic overlap between adjacent basis elements. If the sum of the lower terms reaches or exceeds $M_k$, a deterministic carry to a higher digit occurs. It is this criterion that blocks the multiplicity of decompositions and ensures the absolute uniqueness of the representation of a number in the normalized Mersenne basis.

The transition from the spectral form to the canonical form in the general case is achieved by iteratively applying the splitting identity \eqref{eq:splitting}, which acts as an annihilation operator for negative masses (deficits) with the subsequent resolution of local overflows.

To visually demonstrate the algebraic structure of the introduced spaces, Appendix~\ref{app:equivalence} provides decompositions of test numbers illustrating the transition from spectral topology to canonical form.

\section{Proof of Representation Uniqueness}
\label{sec:uniqueness}

To prove the uniqueness of the canonical form, it is necessary to establish a strict upper bound for the arithmetic sum of the lower-order digits and to show the impossibility of compensating for discrepancies at higher indices.

\subsection{Evaluating the Capacity of Lower-Order Digits}

\begin{lemma}[Supremum of the Lower Digits]\label{lm:suprem}
For any sequence of coefficients $a_j \in \{0, 1, 2\}$ satisfying the canonicity criterion \eqref{eq:canonicity}, the exact upper bound of the sum of the first $k-1$ elements is strictly limited by the value of the macro-anchor $M_k$:
\begin{equation}
\sum_{j=1}^{k-1} a_j M_j \le M_k - M_1 < M_k.
\end{equation}
\end{lemma}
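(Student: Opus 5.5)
The plan is to treat the canonicity criterion \eqref{eq:canonicity} as the source of the strict inequality $\sum_{j=1}^{k-1} a_j M_j < M_k$. The sharper bound $M_k - M_1$ should then follow from an integrality (discreteness) argument: every quantity involved lies in the lattice $\tfrac13\mathbb{Z} = M_1\mathbb{Z}$. I would apply this to every index $k$ for which \eqref{eq:canonicity} is assumed. These are the indices $k \le K$; for larger $k$ the statement needs the same hypothesis extended.

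\textbf{Step 1 (lattice membership).} By definition, $3M_j = 2^j - 1 \in \mathbb{Z}$ for every $j \ge 1$. Since the coefficients $a_j \in \{0,1,2\}$ are integers, the quantity
\[
3S_{k-1} := 3\sum_{j=1}^{k-1} a_j M_j = \sum_{j=1}^{k-1} a_j (2^j - 1)
\]
is a non-negative integer. Likewise $3M_k = 2^k - 1$ is an integer. So both $S_{k-1}$ and $M_k$ are integer multiples of $M_1 = \tfrac13$.

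\textbf{Step 2 (strict to non-strict).} The canonicity criterion gives $3S_{k-1} < 3M_k$, and both sides are integers. Hence $3S_{k-1} \le 3M_k - 1$. Dividing by 3 yields
\[
S_{k-1} \le M_k - \tfrac13 = M_k - M_1 < M_k,
\]
which is the claimed chain of inequalities.

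\textbf{Step 3 (exactness of the bound).} To justify calling $M_k - M_1$ the \emph{exact} upper bound, I would show that it is attained by a canonical string. The recursion $M_k = 2M_{k-1} + M_1$ from the introduction gives $M_k - M_1 = 2M_{k-1}$. Take $a_{k-1} = 2$ and all lower coefficients equal to $0$, and check the criterion at each index:
\begin{itemize}
\item at indices $\le k-1$ the lower sums are $0$, which is below every basis element;
\item at index $k$ the lower sum is $2M_{k-1} = M_k - M_1 < M_k$.
\end{itemize}
So the supremum is achieved, and it is a maximum.

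The main obstacle is not computational. It is recognizing that strictness alone gives only $< M_k$, and that the gap $M_1$ comes from the fact that all masses live on the grid $\tfrac13\mathbb{Z}$. The remaining care is bookkeeping about which indices $k$ the criterion is assumed for.
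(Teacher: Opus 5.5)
Your proof is correct, but it reaches the sharpened bound by a different mechanism from the paper's. The paper starts from the identity $2M_{k-1} = M_k - M_1$ and examines only the configuration $a_{k-1}=2$. It observes that any further nonzero lower term contributes at least $M_1$, which pushes the prefix sum to $M_k$ or beyond, and then declares $M_k - M_1$ to be the maximum. Prefixes with $a_{k-1}\in\{0,1\}$ are never treated. They would need either canonicity at index $k-1$, which bounds the prefix sum below $2M_{k-1}$, or the integrality fact you use.

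Your Step 2 says that $3S_{k-1}$ and $3M_k$ are integers, so a strict inequality between them loses at least one unit. This handles every configuration at once and uses the hypothesis only at the single index $k$. It needs nothing about the basis beyond $3M_j \in \mathbb{Z}$, and it shows the lemma is really ``strict canonicity plus the grid $M_1\mathbb{Z}$''. The paper's route, in exchange, exhibits the extremal configuration $2M_{k-1}$ with zeros below, which is what drives the later lexicographic corollary. Your Step 3 recovers exactly this configuration to justify the word ``exact''.

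Your caveat about indices matters. The definition imposes \eqref{eq:canonicity} only for $k \le K$, so for instance $M_1 + 2M_3 = 5 = M_4$ passes the literal definition. The uniqueness proof later invokes the lemma at an index that may exceed $K'$. So the lemma must be read with the hypothesis holding at the index in question, as you state it.
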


\begin{proof}
Let us consider the basic algebraic identity connecting two adjacent elements. By definition, $M_k = \frac{2^k - 1}{3}$. We calculate the value of $2M_{k-1}$:
\begin{equation}
2M_{k-1} = 2 \left( \frac{2^{k-1} - 1}{3} \right) = \frac{2^k - 2}{3} = \frac{2^k - 1 - 1}{3} = M_k - \frac{1}{3}.
\end{equation}
Since $M_1 = 1/3$, we obtain a strict equality:
\begin{equation}
2M_{k-1} = M_k - M_1.
\end{equation}
This identity implies a limitation on algebraic capacity. If the leading coefficient in the prefix takes the maximum value $a_{k-1} = 2$, then the term $2M_{k-1}$ is already equal to $M_k - M_1$. Any addition of non-zero mass from lower digits ($a_{k-2}, a_{k-3}, \dots$) will cause the total sum to exceed $M_k$, violating the definition of the canonical form \eqref{eq:canonicity}. 
Consequently, the maximum possible sum of any canonical sequence is limited to the value $M_k - M_1$, which is strictly less than $M_k$.
\end{proof}

\subsection{Uniqueness Theorem}

\begin{theorem}[Uniqueness of Canonical Representation]\label{tm:unikum}
For any natural number $x \in \mathbb{N}$, there exists at most one canonical representation in the normalized Mersenne basis.
\end{theorem}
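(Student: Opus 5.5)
The plan is the standard ``highest differing digit'' argument, with Lemma~\ref{lm:suprem} supplying the key inequality. Suppose $x$ has two canonical representations $x=\sum_{k=1}^{K} a_k M_k=\sum_{k=1}^{K'} a'_k M_k$. I would pad the shorter one with zero coefficients so that both run over $1\le k\le L=\max(K,K')$. Assume the coefficient vectors are not identical, and let $m$ be the \emph{largest} index with $a_m\neq a'_m$. By symmetry we may assume $a_m>a'_m$, so $a_m-a'_m\ge 1$.

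Since the coefficients agree above $m$, subtracting the two expansions cancels every term with index greater than $m$. This leaves
\begin{equation*}
(a_m-a'_m)\,M_m=\sum_{j=1}^{m-1}(a'_j-a_j)\,M_j .
\end{equation*}
The left-hand side is at least $M_m$. On the right-hand side, $a_j\ge 0$ and $M_j>0$, so it is bounded above by $\sum_{j=1}^{m-1}a'_j M_j$. Lemma~\ref{lm:suprem}, applied to the primed sequence at the index $k=m$, bounds this by $M_m-M_1<M_m$. We then have $M_m\le M_m-M_1$, a contradiction. Hence $a_k=a'_k$ for all $k$, and the two representations coincide after removing trailing zeros.

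The step I expect to be the main obstacle is justifying the use of Lemma~\ref{lm:suprem} (equivalently, condition \eqref{eq:canonicity}) at the index $k=m$ when $m>K'$. In that case the primed representation has been padded, and the canonicity condition was only imposed for $k\le K'$. When $m\le K'$ the bound is exactly \eqref{eq:canonicity} for the primed sequence at $k=m$, so nothing more is needed.

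When $m>K'$ we have $a'_m=0$, and the sum $\sum_{j<m}a'_jM_j$ is all of $x$. I would first read the canonicity criterion, as Lemma~\ref{lm:suprem} is phrased, as a statement about every prefix, including the full sum at $k=K'+1$. That gives $x<M_{K'+1}\le M_m$, while the unprimed side gives $x\ge a_mM_m\ge M_m$, which settles this case directly. If that reading is not granted, I would fall back on proving $x<M_{K'+1}$ from \eqref{eq:canonicity} at $k=K'$ together with $a'_{K'}\le 2$ and the identity $2M_{K'}=M_{K'+1}-M_1$ from the proof of Lemma~\ref{lm:suprem}. This fallback is the delicate point: with $a'_{K'}=2$ the leftover lower-order mass must fit below $M_1$. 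I would therefore state explicitly that canonicity is required of the full sum as well, so that the argument closes uniformly.
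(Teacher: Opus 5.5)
Your argument is the same as the paper's: take the largest index $m$ where the coefficient vectors differ, cancel everything above $m$, and compare $(a_m-a'_m)M_m\ge M_m$ with $\sum_{j<m}a'_jM_j<M_m$. The case $m>K'$ that you single out is a genuine gap, and the paper's proof has it too. After padding to length $N=\max(K,K')$, the paper invokes Lemma~\ref{lm:suprem} and \eqref{eq:canonicity} at index $m$ for the padded sequence, even though the definition imposes that condition only for $k\le K'$.

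Your fallback cannot be completed, because under the literal definition the theorem is false. From $2M_3=M_4-M_1$ we get $5=M_4=2M_3+M_1$. The vector $(1,0,2)$ with $K=3$ satisfies \eqref{eq:canonicity} for $k=1,2,3$: the lower sums are $0,\tfrac13,\tfrac13$, against $M_1=\tfrac13$, $M_2=1$, $M_3=\tfrac73$. The vector $(0,0,0,1)$ with $K=4$ is canonical as well. In your notation $m=4>K'=3$ and $a'_{K'}=2$, while $\sum_{j<4}a'_jM_j=5=M_4$. This is exactly the configuration you worried about: the lower-order mass does not fit below $M_1$. The same pair also refutes the Lexicographic Prohibition corollary as literally stated, since its proof uses the criterion at $k=m+1$, which is the same unstated assumption.

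So requiring the criterion for the full sum, $x<M_{K+1}$, is a necessary correction of the definition, not an optional reading. Equivalently, \eqref{eq:canonicity} must continue to hold after zero-padding. This does not break existence, because the greedy output satisfies it: $x=R_0<M_{k_1+1}$. Once you adopt it, the padded primed sequence satisfies the criterion at every $k\le N$, since for $k>K'$ one has $\sum_{j<k}a'_jM_j=x<M_{K'+1}\le M_k$. Your main argument then closes in one step, with no case split. State this hypothesis explicitly and drop the fallback.
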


\begin{proof}
Suppose, by contradiction, that the number $x$ has two different canonical decompositions:
\begin{equation}
x = \sum_{j=1}^K a_j M_j = \sum_{j=1}^{K'} a'_j M_j,
\end{equation}
where $a_j, a'_j \in \{0, 1, 2\}$. Without loss of generality, the shorter sum can be padded with zero coefficients, aligning their lengths to $N = \max(K, K')$.

Since the forms are different, there exists a maximum index $m \le N$ where the coefficients do not match: $a_m \neq a'_m$. Let, for definiteness, $a_m > a'_m$. 

Since for all $j > m$ the coefficients are equal ($a_j = a'_j$), they can be algebraically canceled. The equation takes the form:
\begin{equation}
a_m M_m + \sum_{j=1}^{m-1} a_j M_j = a'_m M_m + \sum_{j=1}^{m-1} a'_j M_j.
\end{equation}

Grouping the terms by moving the $m$-th components to the left and the others to the right yields:
\begin{equation}
(a_m - a'_m) M_m = \sum_{j=1}^{m-1} a'_j M_j - \sum_{j=1}^{m-1} a_j M_j.
\end{equation}

Let us evaluate the left-hand side (LHS) of the equation. Since $a_m$ and $a'_m$ are integers and $a_m > a'_m$, their minimum difference is $(a_m - a'_m) \ge 1$. Consequently:
\begin{equation}
\text{LHS} \ge 1 \cdot M_m = M_m.
\end{equation}

Let us evaluate the right-hand side (RHS). Since the arithmetic sum $\sum a_j M_j$ is non-negative, the right side is strictly bounded above by the first sum:
\begin{equation}
\text{RHS} \le \sum_{j=1}^{m-1} a'_j M_j.
\end{equation}
According to Lemma~\ref{lm:suprem} on the supremum of lower-order digits and the canonicity condition \eqref{eq:canonicity}, the sum of the lower digits of the canonical form is strictly less than $M_m$. Therefore:
\begin{equation}
\text{RHS} < M_m.
\end{equation}

Combining the estimates of the left and right sides, we obtain an unsolvable algebraic contradiction:
\begin{equation}
M_m \le \text{LHS} = \text{RHS} < M_m.
\end{equation}

The resulting contradiction ($M_m < M_m$) proves that the initial assumption about the existence of a divergence index $m$ is false. All coefficients must pairwisely coincide ($a_j = a'_j$ for all $j$). The canonical representation is absolutely unique.
\end{proof}

\subsection{Lexicographic Constraint of the Canonical Form}

The analysis of the boundary identity $2M_{k-1} = M_k - M_1$ applied in Lemma~\ref{lm:suprem} allows us to formulate a strict syntactic rule (the topology of a regular language) governing the canonical form of the normalized Mersenne basis.

\begin{corollary}[Lexicographic Prohibition Theorem]
In any canonical decomposition of a number $x = \sum a_k M_k$, the coefficient $a_m = 2$ can be present exclusively under the condition that all lower digits are equal to zero ($a_j = 0$ for all $j < m$).
\end{corollary}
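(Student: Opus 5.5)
The plan is to apply the canonicity condition \eqref{eq:canonicity} one index \emph{above} the position of the digit $2$, and then use the boundary identity $2M_{m} = M_{m+1} - M_1$ from the proof of Lemma~\ref{lm:suprem}. Concretely, suppose $a_m = 2$ in a canonical decomposition. Condition \eqref{eq:canonicity} at index $k = m+1$ reads
\begin{equation*}
2M_m + \sum_{j=1}^{m-1} a_j M_j \;=\; \sum_{j=1}^{m} a_j M_j \;<\; M_{m+1}.
\end{equation*}
Substituting $M_{m+1} = 2M_m + M_1$ and cancelling $2M_m$ on both sides gives
\begin{equation*}
\sum_{j=1}^{m-1} a_j M_j < M_1 = \tfrac13 .
\end{equation*}

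The second step is to show that this forces every lower digit to vanish. The sequence $M_j$ is strictly increasing and positive for $j \ge 1$, with smallest value $M_1 = 1/3$. Since every $a_j$ is a non-negative integer, a single nonzero $a_j$ with $j<m$ already gives $\sum_{j<m} a_j M_j \ge a_j M_j \ge M_1$. This contradicts the strict inequality above. Hence $a_j = 0$ for all $j<m$, which is the statement of the corollary.

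The only delicate point, and the step I expect to be the main obstacle, is the case where $m = K$ is the top index. Condition \eqref{eq:canonicity} is literally stated only for $k \le K$, so the inequality at $k = K+1$ is not formally available there.

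This case cannot simply be dropped. For example, $11 = 2M_4 + M_2$ satisfies \eqref{eq:canonicity} for all $k \le 4$, yet $11 = M_5 + 2M_1$ is another decomposition of the same number. So without the condition at $K+1$, both the corollary and Theorem~\ref{tm:unikum} would fail.

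I would therefore make explicit the convention already used implicitly in the proof of Theorem~\ref{tm:unikum}: a decomposition may be padded with a zero leading coefficient $a_{K+1} = 0$. Under that padding, condition \eqref{eq:canonicity} is required at $k = K+1$ as well, i.e. the total sum satisfies $x < M_{K+1}$. This is exactly what greedy packing guarantees, since otherwise a carry to $M_{K+1}$ would have occurred. With this convention the argument above applies uniformly to every $m \le K$, and the proof is complete.
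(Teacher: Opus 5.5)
Your proposal is correct and is essentially the paper's argument: impose canonicity at index $m+1$, rewrite $2M_m = M_{m+1} - M_1$, and observe that any nonzero lower digit contributes at least $M_1$. Your extra care at the top index is justified. The paper's proof silently uses the condition at $k=m+1$, which the literal definition (stated only for $k \le K$) does not supply when $m=K$. Your example $11 = 2M_4 + M_2 = M_5 + 2M_1$ checks out, so the convention $x < M_{K+1}$ is genuinely needed for both this corollary and Theorem~\ref{tm:unikum}.
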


\begin{proof}
Let the canonical form contain the coefficient $a_m = 2$. Its arithmetic contribution is $2M_m$. According to the identity, $2M_m = M_{m+1} - M_1$. 
To preserve the canonicity of the form, the sum of all elements up to the index $m+1$ (including $2M_m$ and all lower digits $a_j M_j$) must be strictly less than $M_{m+1}$. 
Let us consider this sum:
\begin{equation}
2M_m + \sum_{j=1}^{m-1} a_j M_j = M_{m+1} - M_1 + \sum_{j=1}^{m-1} a_j M_j.
\end{equation}
If at least one coefficient $a_j > 0$, the minimum possible contribution of the lower digits is equal to $M_1$. The sum then equals or exceeds $M_{m+1}$, which immediately violates the canonicity criterion \eqref{eq:canonicity}. Therefore, the only permissible state is absolute zero for all lower digits: $\sum_{j=1}^{m-1} a_j M_j = 0$.
\end{proof}

This syntactic constraint renders the normalized Mersenne basis a conceptual analogue of the Fibonacci numeration system (Zeckendorf representation). While the Zeckendorf system prohibits the pattern of adjacent ones, the proposed system imposes a total ban on the presence of any non-zero values to the right of the coefficient 2.

\section{Proof of Completeness (Existence Theorem)}
\label{sec:existence}

To complete the justification of the normalized Mersenne basis, it is necessary to prove that the canonical form exists for any natural number. The proof is conducted via a constructive method: we formalize the greedy algorithm for extracting the remainder and prove its unconditional convergence in a finite number of steps.

\subsection{Greedy Extraction Algorithm}

Let an arbitrary natural number $x \in \mathbb{N}$ be given. We define the initial remainder as $R_0 = x$. The process of constructing the canonical form is an iterative sequence of steps, where at each step $i \ge 1$ the index $k_i$, the coefficient $a_i$, and the new remainder $R_i$ are calculated.

\textbf{Step 1. Index Determination.} 
For the current remainder $R_{i-1} > 0$, the maximum index $k_i \in \mathbb{N}$ is selected, satisfying the condition:
\begin{equation}
M_{k_i} \le R_{i-1}.
\end{equation}
Due to the exponential growth of the basis ($M_k \to \infty$), such a maximum index always exists and is unique. From the maximality of $k_i$, it strictly follows that $R_{i-1} < M_{k_i+1}$.

\textbf{Step 2. Coefficient Determination.}
The coefficient $a_i$ is calculated as the maximum integer for which the subtracted mass does not exceed the remainder:
\begin{equation}
a_i = \lfloor R_{i-1} / M_{k_i} \rfloor.
\end{equation}
Let us prove that $a_i \in \{1, 2\}$. 
Since $R_{i-1} \ge M_{k_i}$, it is obvious that $a_i \ge 1$. 
For the upper bound, we use the splitting identity \eqref{eq:splitting}, which implies that $3M_{k_i} = M_{k_i+1} + 2M_{k_i-1}$. Since $M_{k_i-1} > 0$, we obtain a strict inequality:
\begin{equation}
M_{k_i+1} < 3M_{k_i}.
\end{equation}
Since $k_i$ is maximal, $R_{i-1} < M_{k_i+1}$. Combining the inequalities yields $R_{i-1} < 3M_{k_i}$. Consequently, the fraction $R_{i-1} / M_{k_i} < 3$, which deterministically limits the maximum integer value of the coefficient: $a_i \le 2$.

\textbf{Step 3. Calculation of the New Remainder.}
Update the remainder value:
\begin{equation}
R_i = R_{i-1} - a_i M_{k_i}.
\end{equation}

\subsection{Strict Monotonicity of the Remainder Sequence}

Let us show that the algorithm generates a strictly decreasing sequence of indices and remainders.

By the definition of the integer part operation, the new remainder $R_i$ represents the remainder of division (in a fractional sense), and it is strictly less than the divisor:
\begin{equation}
R_i < M_{k_i}.
\end{equation}
This inequality guarantees that at the next iteration, the maximum index $k_{i+1}$, satisfying the condition $M_{k_{i+1}} \le R_i$, will be strictly less than the previous one: $k_{i+1} < k_i$.

Since $a_i \ge 1$ and the basis elements are strictly positive ($M_k > 0$), the sequence of remainders monotonically decreases:
\begin{equation}
R_0 > R_1 > R_2 > \dots \ge 0.
\end{equation}

\subsection{Finite Convergence and Completion of Proof}

\begin{theorem}[Existence of Canonical Representation]
Any natural number $x \in \mathbb{N}$ can be represented in the canonical form of the normalized Mersenne basis in a finite number of steps.
\end{theorem}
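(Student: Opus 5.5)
The plan is to run the greedy extraction algorithm of the previous subsection on $R_0 = x$ and prove two things: that it stops after finitely many steps with remainder exactly $0$, and that the digit vector it produces satisfies the canonicity condition \eqref{eq:canonicity}. Set $a_{k_i} := a_i$ at the selected indices and $a_k := 0$ at all other indices, with $K := k_1$.

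\emph{Step 1: a lattice invariant.} Every basis element satisfies $3M_k = 2^k - 1 \in \mathbb{Z}$, so $M_k \in \tfrac13\mathbb{Z}$. Since $R_0 = x \in \mathbb{Z}$ and each step subtracts an integer multiple of some $M_{k_i}$, induction gives $3R_i \in \mathbb{Z}_{\ge 0}$ for all $i$; nonnegativity comes from $R_i \ge 0$ in the floor construction. This has two consequences:
\begin{itemize}
\item whenever $R_{i-1} > 0$ we have $R_{i-1} \ge \tfrac13 = M_1$, so the index $k_i \ge 1$ in Step~1 of the algorithm genuinely exists;
\item the strictly decreasing sequence $R_0 > R_1 > \dots$ established above lives in the discrete set $\tfrac13\mathbb{Z}_{\ge 0}$.
\end{itemize}
Hence the sequence cannot continue indefinitely. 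Equivalently, the strictly decreasing indices $k_1 > k_2 > \dots \ge 1$ allow at most $k_1$ steps. The process halts at the first $R_s$ with no admissible index, which means $R_s < M_1 = \tfrac13$. Together with $3R_s \in \mathbb{Z}_{\ge 0}$ this forces $R_s = 0$, and therefore
\[
x = \sum_{i=1}^{s} a_i M_{k_i}.
\]
The bound $a_i \in \{1,2\}$ was already proved in Step~2 of the algorithm, so all digits lie in $\{0,1,2\}$.

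\emph{Step 2: canonicity.} I expect this to be the main point requiring care. Fix $k \le K$ and let $i$ be the last step with $k_i \ge k$; such a step exists because $k_1 = K \ge k$. Every later selected index satisfies $k_j \le k_{i+1} < k$, and every earlier one exceeds $k$, so the lower-order sum telescopes:
\[
\sum_{j=1}^{k-1} a_j M_j \;=\; \sum_{j>i} a_{j} M_{k_j} \;=\; R_i .
\]
If $R_i = 0$, condition \eqref{eq:canonicity} holds trivially since $M_k > 0$. Otherwise step $i+1$ exists, and maximality of $k_{i+1}$ gives $R_i < M_{k_{i+1}+1}$. Because $k_{i+1} + 1 \le k$ and $(M_k)$ is increasing, this yields $R_i < M_k$, which is exactly \eqref{eq:canonicity}.

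The only subtle point is the termination argument: strict decrease of the remainders alone does not guarantee reaching $0$. It is the $\tfrac13\mathbb{Z}$ lattice observation that closes the gap, which is why I would carry it out first. Combined with Theorem~\ref{tm:unikum}, the representation produced is moreover the unique canonical one.
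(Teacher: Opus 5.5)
Your proof is correct and takes essentially the same route as the paper: you run the greedy extraction, use $3R_i \in \mathbb{Z}_{\ge 0}$ to force termination with $R_s = 0$, and read canonicity off the remainder bounds. Your version is tighter on two points the paper glosses over: $R_{i-1}>0$ implies $R_{i-1}\ge M_1$, so the index $k_i$ always exists; and canonicity at positions $k$ strictly between selected indices needs the maximality bound $R_i < M_{k_{i+1}+1} \le M_k$, not just the paper's $R_i < M_{k_i}$. (One small fix: use a separate letter for the step coefficients, since $a_{k_i} := a_i$ clashes with the positional digits $a_j$.)
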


\begin{proof}
Consider the sequence of remainders $\{R_i\}$. The basis elements $M_k$ have the form $(2^k - 1) / 3$. Therefore, any remainder $R_i$, obtained by subtracting combinations of $a_i M_{k_i}$ from an integer $x$, is a rational number with a denominator of no more than 3.

Let us multiply the sequence of remainders by 3:
\begin{equation}
3R_0 > 3R_1 > 3R_2 > \dots \ge 0.
\end{equation}
All elements $3R_i$ belong to the set of non-negative integers ($\mathbb{Z}_{\ge 0}$). 

We apply a property of discrete mathematics which states that any strictly decreasing sequence of non-negative integers must reach zero in a finite number of steps $n$. 

At the point of termination ($R_n = 0$), the original number $x$ is exactly decomposed into the sum:
\begin{equation}
x = \sum_{i=1}^n a_i M_{k_i}, \quad a_i \in \{1, 2\}.
\end{equation}

Since the indices $k_i$ strictly decrease, this expression is equivalent to the sum $\sum_{k=1}^K a_k M_k$ (where zero coefficients $a_k = 0$ are inserted for missing indices). 
The property $R_i < M_{k_i}$ guarantees that the total mass of all remaining (lower-order) digits is strictly less than the current basis element, which perfectly aligns with the canonicity criterion \eqref{eq:canonicity}. 

Thus, the constructive algorithm concludes by generating a correct and complete canonical form, proving the existence theorem.
\end{proof}

\subsection{Dynamic Convergence of the Splitting Cascade}

In addition to the algorithm for extracting the highest basis element, the system features a local mass splitting algorithm. As demonstrated in Section~\ref{subsec:alphabet}, any excess over the allowed alphabet initiates a directed cascade of transformations according to the rule $3M_k \to M_{k+1} + 2M_{k-1}$. 

To rigorously prove that the local splitting cascade cannot loop and deterministically terminates in finite time, we introduce a global state potential function $P$ (a Lyapunov function), a classical method of analysis in the theory of odometers and discrete dynamical systems \cite{Grabner1995, Berthe2006}. 

The convergence of the algorithm can be proven using two different variants of weight functions, each revealing specific properties of the system's dynamics.

\textbf{Variant 1: Linear Weight Function (Constant Gradient).} 
We define the potential as a linearly weighted sum of all coefficients:
\begin{equation}
P_1 = \sum_{k=1}^N c_k k.
\end{equation}
Let us calculate the exact change in potential $\Delta P_1 = (P_1)_{new} - (P_1)_{old}$ for a single act of local splitting at an arbitrary position $k \ge 2$. During the transaction $3M_k \to M_{k+1} + 2M_{k-1}$, only three adjacent components change:
\begin{equation}
\Delta P_1 = (k+1) + 2(k-1) - 3k = k + 1 + 2k - 2 - 3k = -1.
\end{equation}
The linear function mathematically proves that each local split uniformly burns exactly one unit of structural potential, regardless of the digit's position.

\textbf{Variant 2: Quadratic Weight Function (Accelerated Gradient).} 
We define the potential with a quadratic weight, which penalizes overflows at higher digits more heavily:
\begin{equation}
P_2 = \sum_{k=1}^N c_k k^2.
\end{equation}
The change in potential during the same transaction will be:
\begin{equation}
\Delta P_2 = (k+1)^2 + 2(k-1)^2 - 3k^2 = (k^2 + 2k + 1) + (2k^2 - 4k + 2) - 3k^2 = 3 - 2k.
\end{equation}
Since for a regular split the index $k \ge 2$, the expression $3 - 2k$ always yields a strictly negative result ($\Delta P_2 \le -1$).

Both variants prove that each local splitting of a coefficient deterministically decreases the global structural potential of the system. Since the initial energy potential of any finite number is strictly bounded, and the coefficients during the algorithm's execution cannot become negative (the algorithm subtracts 3 only under the condition $a_k \ge 3$), the minimum possible potential of the system is bounded from below. 

From this, it irrefutably follows that the process of avalanche reduction must terminate in a finite number of steps, deterministically bringing the system to a stable canonical form.
\section{Conclusion}
\label{sec:conclusion}

In this paper, it has been rigorously proven that the normalized Mersenne basis $M_k = (2^k - 1) / 3$ forms a complete and absolutely unique positional numeral system for the set of natural numbers. 

The introduction of the normalization operation has overcome the structural homogeneity of classical Mersenne numbers (OEIS \textbf{A000225}) by forming an alternating bit topology. By seamlessly bridging continuous integer sequences with a mixed integer-fractional domain, this basis directly embeds the properties of alternating binary bitmasks (OEIS \textbf{A002450}) into a rigorous positional framework. It has been shown that the internal dynamics of the system are strictly governed by the splitting identity $3M_k = M_{k+1} + 2M_{k-1}$, which deterministically restricts the allowable alphabet of coefficients of the canonical form to the set $\{0, 1, 2\}$. 

Lemma~\ref{lm:suprem} on the supremum of lower digits (relying on the boundary identity $2M_{k-1} = M_k - M_1$) established a strict mathematical barrier precluding the algebraic overlap of basis elements and guaranteeing the uniqueness of the decomposition. In turn, the constructive algorithm of sequential remainder extraction proved the representability of any natural quantity. Consequently, the normalized Mersenne basis forms a closed algebraic space, introducing a novel analytical framework for the study of integer sequences, discrete recurrent structures, Diophantine equations, and digital masking algorithms \cite{Balonin2017}.
\section*{Acknowledgments}
The author is grateful to my daughter for valuable discussions and assistance with the English phrasing and stylistic refinement of the manuscript. 

As a non-native English speaker, the author also acknowledges the use of a large language model (LLM) strictly as a translation and language-editing tool to ensure the English phrasing meets academic standards. 

All mathematical concepts, logic, proofs, and the original text were solely conceived and written by the human author prior to translation.
\section*{Funding statement}
No funding was received.
\section*{Conflict of interest}
We have no conflicts of interest to disclose.

\appendix
\section{Examples of Decomposition in Spectral and Canonical Forms}\label{app:example727}

This appendix demonstrates the algorithms for constructing spectral and canonical forms for test numbers $x = 7$ and $x = 27$. The values of the basis elements are calculated using the formula $M_k = (2^k - 1) / 3$. 
The first elements of the basis are: $M_1 = 1/3, M_2 = 1, M_3 = 7/3, M_4 = 5, M_5 = 31/3, M_6 = 21, M_7 = 127/3$.

\subsection*{Example 1: Decomposition of $x = 7$}

\textbf{1. Spectral Form.}
Calculate the value $3x = 21$. In binary representation: $21 = 10101_2$. 
Vector of bits: $b_4=1, b_3=0, b_2=1, b_1=0, b_0=1$.
Applying the difference operator $c_k = b_{k-1} - b_k$ (with boundary condition $b_{-1}=0$), we obtain the coefficient vector:
\begin{align*}
c_1 &= 1 - 0 = 1, \\
c_2 &= 0 - 1 = -1, \\
c_3 &= 1 - 0 = 1, \\
c_4 &= 0 - 1 = -1, \\
c_5 &= 1 - 0 = 1 \quad (\text{where } b_5 = 0).
\end{align*}
Spectral vector: $\vec{c} = [1, -1, 1, -1, 1]$.
Spectral form:
$$ 7 = 1 \cdot M_1 - 1 \cdot M_2 + 1 \cdot M_3 - 1 \cdot M_4 + 1 \cdot M_5. $$
Verification: $\frac{1}{3} - 1 + \frac{7}{3} - 5 + \frac{31}{3} = \frac{39}{3} - 6 = 13 - 6 = 7$.

\textbf{2. Canonical Form.}
The greedy algorithm extracts the maximum basis element not exceeding the remainder.
For $x = 7$, the maximum element is $M_4 = 5$. Remainder: $7 - 5 = 2$.
For the remainder $2$, the maximum element is $M_2 = 1$. Remainder: $2 - 2 \cdot 1 = 0$.
Canonical form:
$$ 7 = 2 \cdot M_2 + 1 \cdot M_4. $$
Coefficient vector $a_k \in \{0, 1, 2\}$: $\vec{a} = [0, 2, 0, 1]$.
Verification of canonicity: $\sum_{j=1}^3 a_j M_j = 2 \cdot 1 = 2 < M_4$. The condition is strictly met.

\subsection*{Example 2: Decomposition of $x = 27$}

\textbf{1. Spectral Form.}
Calculate $3x = 81$. In binary representation: $81 = 1010001_2$.
Vector of bits: $b_6=1, b_5=0, b_4=1, b_3=0, b_2=0, b_1=0, b_0=1$.
The difference operator yields the spectral vector:
$$ \vec{c} = [1, 0, 0, -1, 1, -1, 1]. $$
Spectral form:
$$ 27 = 1 \cdot M_1 - 1 \cdot M_4 + 1 \cdot M_5 - 1 \cdot M_6 + 1 \cdot M_7. $$
Verification: $\frac{1}{3} - 5 + \frac{31}{3} - 21 + \frac{127}{3} = \frac{159}{3} - 26 = 53 - 26 = 27$.

\textbf{2. Canonical Form.}
Greedy algorithm for $x = 27$:
Maximum element: $M_6 = 21$. Remainder: $27 - 21 = 6$.
Maximum element for the remainder $6$: $M_4 = 5$. Remainder: $6 - 5 = 1$.
Maximum element for the remainder $1$: $M_2 = 1$. Remainder: $1 - 1 = 0$.
Canonical form:
$$ 27 = 1 \cdot M_2 + 1 \cdot M_4 + 1 \cdot M_6. $$
Verification of canonicity for the highest element $M_6$: $\sum_{j=1}^5 a_j M_j = 1 \cdot 1 + 1 \cdot 5 = 6 < M_6$. The condition is strictly met.

\section{Algorithmic Construction of the Canonical Form}\label{app:algorithm}

This appendix demonstrates in detail the execution of the constructive algorithm (described in Section~\ref{sec:existence}) for converting the natural number $x = 19$ into the canonical form of the normalized Mersenne basis. 
The values of the basis elements $M_k = (2^k - 1) / 3$ are:
$M_1 = 1/3, M_2 = 1, M_3 = 7/3, M_4 = 5, M_5 = 31/3 \approx 10.33, M_6 = 21$.

Initialization: $R_0 = 19$.

\textbf{Step 1.} 
The maximum index $k_1$ for which $M_{k_1} \le 19$ is $5$ ($M_5 = 31/3$).
Coefficient: $a_1 = \lfloor 19 / (31/3) \rfloor = \lfloor 57/31 \rfloor = 1$.
Remainder: $R_1 = 19 - 1 \cdot \frac{31}{3} = \frac{57}{3} - \frac{31}{3} = \frac{26}{3} \approx 8.66$.

\textbf{Step 2.} 
The maximum index $k_2$ for which $M_{k_2} \le 26/3$ is $4$ ($M_4 = 15/3 = 5$).
Coefficient: $a_2 = \lfloor (26/3) / 5 \rfloor = \lfloor 26/15 \rfloor = 1$.
Remainder: $R_2 = \frac{26}{3} - 1 \cdot \frac{15}{3} = \frac{11}{3} \approx 3.66$.

\textbf{Step 3.} 
The maximum index $k_3$ for which $M_{k_3} \le 11/3$ is $3$ ($M_3 = 7/3$).
Coefficient: $a_3 = \lfloor (11/3) / (7/3) \rfloor = \lfloor 11/7 \rfloor = 1$.
Remainder: $R_3 = \frac{11}{3} - 1 \cdot \frac{7}{3} = \frac{4}{3} \approx 1.33$.

\textbf{Step 4.} 
The maximum index $k_4$ for which $M_{k_4} \le 4/3$ is $2$ ($M_2 = 3/3 = 1$).
Coefficient: $a_4 = \lfloor (4/3) / 1 \rfloor = 1$.
Remainder: $R_4 = \frac{4}{3} - 1 \cdot \frac{3}{3} = \frac{1}{3}$.

\textbf{Step 5.} 
The maximum index $k_5$ for which $M_{k_5} \le 1/3$ is $1$ ($M_1 = 1/3$).
Coefficient: $a_5 = \lfloor (1/3) / (1/3) \rfloor = 1$.
Remainder: $R_5 = \frac{1}{3} - 1 \cdot \frac{1}{3} = 0$.

The algorithm is complete. The canonical form of the number 19 is:
$$ 19 = M_5 + M_4 + M_3 + M_2 + M_1. $$
Verification of the canonicity criterion for the highest element $M_5$:
$\sum_{j=1}^4 M_j = 5 + \frac{7}{3} + 1 + \frac{1}{3} = 6 + \frac{8}{3} = \frac{26}{3}$. Since $\frac{26}{3} < \frac{31}{3} (M_5)$, the condition of greedy packing is strictly met.

\section{Equivalence of Spectral and Canonical Forms}\label{app:equivalence}

Using the number $x = 7$ as an example, we demonstrate the elimination of negative coefficients from the spectral form using exclusively the splitting identity \eqref{eq:splitting}, which implies the recursive replacement of the higher digit: $M_{k+1} = 3M_k - 2M_{k-1}$ based on the reverse form of the splitting identity.

\textbf{1. Spectral Form}
For $3x = 21 = 10101_2$, the difference operator yields the spectrum $\vec{c} = [1, -1, 1, -1, 1]$.
The spectral form is:
\begin{equation}
7 = M_5 - M_4 + M_3 - M_2 + M_1.
\end{equation}

\textbf{2. Algebraic Transformation (Descending Cascade)}
We replace the highest element $M_5$ using the identity $M_5 = 3M_4 - 2M_3$:
$$ 7 = (3M_4 - 2M_3) - M_4 + M_3 - M_2 + M_1 = 2M_4 - M_3 - M_2 + M_1. $$
To eliminate the $-M_3$ component, we split one of the $M_4$ elements using the identity $M_4 = 3M_3 - 2M_2$:
$$ 7 = M_4 + (3M_3 - 2M_2) - M_3 - M_2 + M_1 = M_4 + 2M_3 - 3M_2 + M_1. $$
To eliminate $-3M_2$, we split one $M_3$ element using the identity $M_3 = 3M_2 - 2M_1$:
$$ 7 = M_4 + M_3 + (3M_2 - 2M_1) - 3M_2 + M_1 = M_4 + M_3 - M_1. $$
To eliminate the residual deficit $-M_1$, we completely split $M_3$:
$$ 7 = M_4 + (3M_2 - 2M_1) - M_1 = M_4 + 3M_2 - 3M_1. $$

\textbf{3. Resolution of Boundary Overflow}
We have obtained a form with coefficients exceeding the allowed alphabet. We apply the boundary condition of the splitting identity for $k=1$ (where $M_0 = 0$):
$$ 3M_1 = M_2 + 2M_0 \implies 3M_1 = M_2. $$
Substituting $3M_1 = M_2$ into the obtained equation annihilates the deficit:
$$ 7 = M_4 + 3M_2 - M_2 = M_4 + 2M_2. $$

The final form $x = 2M_2 + M_4$ strictly consists of elements from the alphabet $\{0, 1, 2\}$ and satisfies the canonicity condition ($2M_2 = 2 < M_4 = 5$). The transition is mathematically validated by the base operators of the system.
\end{document}